\title{A generalization of Dirichlet's unit theorem}
  \author[Fili]{Paul Fili}
 \address{Department of Mathematics\\ University of Rochester, Rochester, NY 14627}
 \email{fili@math.rochester.edu}
  \author[Miner]{Zachary Miner}
 \address{Department of Mathematics\\ University of Texas at Austin, TX 78712}
 \email{zminer@math.utexas.edu}
 \subjclass[2010]{11R04, 11R27, 46E30}
 \keywords{Dirichlet's unit theorem, algebraic numbers, Weil height.}
\date{\today}
\newtheorem{thm}{Theorem}
\newtheorem{prop}[thm]{Proposition}
\newtheorem{lemma}[thm]{Lemma}
\newtheorem*{thm*}{Theorem}
\newtheorem*{alg*}{Algorithm}
\newtheorem*{lemma*}{Lemma}
\theoremstyle{remark}
\newtheorem{rmk}[thm]{Remark}
\newtheorem*{rmk*}{Remark}
\newtheorem*{notation*}{Notation}
\newtheorem{example}[thm]{Example}
\theoremstyle{definition}
\newtheorem{defn}[thm]{Definition}
\numberwithin{thm}{section}
\newcommand{\mybf}{\mathbb}
\newcommand{\bR}{\mybf{R}}
\newcommand{\bN}{\mybf{N}}
\newcommand{\bZ}{\mybf{Z}}
\newcommand{\bQ}{\mybf{Q}}
\newcommand{\cP}{\mathcal{P}}
\newcommand{\cF}{\mathcal{F}}
\newcommand{\cO}{\mathcal{O}}
\newcommand{\cK}{\mathcal{K}}
\newcommand{\al}{\alpha}
\providecommand{\abs}[1]{\lvert#1\rvert}
\newcommand{\Gal}{\operatorname{Gal}}
\newcommand{\supp}{\operatorname{supp}}
\newcommand{\ra}{\rightarrow}
\newcommand{\ep}{\epsilon}
\newcommand{\Tor}{\operatorname{Tor}}
\newcommand{\Fix}{\operatorname{Fix}}
\newcommand{\Stab}{\operatorname{Stab}}
\newcommand{\ord}{\operatorname{ord}}
\newcommand{\ilim}{\mathop{\varprojlim}\limits}
\newcommand{\Qbar}{\overline{\mybf{Q}}}
\def\talltareesidedbox#1{\setbox0=\hbox{$#1$}\dimen0=\wd0 \advance\dimen0 by3pt\rlap{\hbox{\vrule height10pt width.4pt
 depth2pt \kern-.4pt\vrule height10.4pt width\dimen0 depth-10pt\kern-.4pt \vrule height10pt width.4pt depth2pt}}
 \relax \hbox to\dimen0{\hss$#1$\hss}}
\def\tareesidedbox#1{\setbox0=\hbox{$#1$}\dimen0=\wd0 \advance\dimen0 by3pt\rlap{\hbox{\vrule height8pt width.4pt
 depth2pt \kern-.4pt\vrule height8.4pt width\dimen0 depth-8pt\kern-.4pt \vrule height8pt width.4pt depth2pt}}
\relax \hbox to\dimen0{\hss$#1$\hss}}
\def\shorttareesidedbox#1{\setbox0=\hbox{$#1$}\dimen0=\wd0 \advance\dimen0 by3pt\rlap{\hbox{\vrule height7pt width.4pt
 depth2pt \kern-.4pt\vrule height7.4pt width\dimen0 depth-7pt\kern-.4pt \vrule height7pt width.4pt depth2pt}}
 \relax \hbox to\dimen0{\hss$#1$\hss}}
\newcommand{\fal}{f_\alpha}
\newcommand{\alg}{\mathrm{alg}}
\begin{document}

\begin{abstract}
 We generalize Dirichlet's $S$-unit theorem from the usual group of $S$-units of a number field $K$ to the infinite rank group of all algebraic numbers having nontrivial valuations only on places lying over $S$. Specifically, we demonstrate that the group of algebraic $S$-units modulo torsion is a $\bQ$-vector space which, when normed by the Weil height, spans a hyperplane determined by the product formula, and that the elements of this vector space which are linearly independent over $\bQ$ retain their linear independence over $\bR$.
\end{abstract}

\maketitle

\section{Introduction}
\subsection{Background}
Let $K$ be a number field with set of places $M_K$. For each $v\in M_K$ lying over a rational prime $p$ let $\|\cdot\|_v$ denote the absolute value extending the usual absolute value $\abs{\ \cdot\ }_p$ on $\bQ$. For a finite set $S\subset M_K$ containing all of the archimedean places, let $U_{K,S}\subset K^\times$ denote the usual \emph{$S$-unit group} of $K$ given by
\[
 U_{K,S} = \{ \al\in K^\times : \|\al\|_v = 1\text{ for all }v\not\in S\}.
\]
A fundamental result in algebraic number theory is Dirichlet's $S$-unit theorem, a result originally proven by Dirichlet for the units of a number field and then extended to $S$-units by Hasse and later Chevalley (see \cite[Theorem III.3.5]{Nark}):
\begin{thm*}[$S$-unit theorem]
 With the notation above, the $S$-unit group $U_{K,S}$ is a finitely generated abelian group of rank $\#S-1$. Further, under the logarithmic embedding
 \begin{equation}\label{eqn:logarithmic-embedding}
 \begin{aligned}
 l : U_{K,S}/\Tor(U_{K,S}) &\ra \bR^S\\
 \xi &\mapsto \left( \frac{[K_v:\bQ_v]}{[K:\bQ]} \log \|\xi\|_v\right)_v,
 \end{aligned}
 \end{equation}
 the image of $l$ is a lattice which spans the hyperplane
 \[
 \{x\in \bR^S : \sum_v x_v =0\}\subset \bR^S.
 \]
\end{thm*}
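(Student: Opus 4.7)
The plan is to verify the three assertions in turn. First, that $l(U_{K,S})$ lies in the hyperplane $\{x\in\bR^S:\sum_v x_v = 0\}$ is immediate from the product formula $\sum_v \frac{[K_v:\bQ_v]}{[K:\bQ]}\log\|\xi\|_v = 0$ valid for every $\xi\in K^\times$, combined with the fact that $\|\xi\|_v = 1$ for $v\notin S$ by definition of $U_{K,S}$.

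Next I would identify $\ker l$ with $\Tor(U_{K,S})$. One inclusion is trivial; for the other, any $\xi\in\ker l$ is an algebraic integer all of whose conjugates have absolute value $1$, hence a root of unity by Kronecker's theorem. To show the resulting injection $U_{K,S}/\Tor(U_{K,S})\hookrightarrow\bR^S$ has discrete image, I would invoke Northcott's theorem: the coordinates of $l(\xi)$ control the Weil height $h(\xi)$, and elements of $K$ with bounded height form a finite set, so the preimage under $l$ of any bounded subset of $\bR^S$ is finite modulo torsion. Consequently $l(U_{K,S}/\Tor(U_{K,S}))$ is a discrete subgroup of the hyperplane, i.e.\ a lattice of rank at most $\#S - 1$.

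The main obstacle is the matching lower bound, equivalently cocompactness of the image inside the hyperplane. I would argue via the geometry of numbers in the $S$-adic Minkowski space $K_S = \prod_{v\in S}K_v$: for each fixed $v_0\in S$, Minkowski's convex body theorem applied to a suitably scaled symmetric convex region (large at $v_0$, small at the other places of $S$) produces $S$-integers of bounded absolute norm with a very small $v_0$-component, and a pigeonhole argument on the finitely many $S$-ideal classes these integers generate yields an actual $S$-unit $\xi_{v_0}\in U_{K,S}$ whose $v_0$-coordinate of $l(\xi_{v_0})$ strictly dominates the others. The resulting $\#S\times\#S$ matrix of log vectors has zero row sums together with a dominant-sign pattern, which forces any $\#S - 1$ of the vectors to be linearly independent over $\bR$; this gives the lower bound on the rank and so, combined with the previous step, the full-rank lattice conclusion. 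The technical heart of the argument is executing this Minkowski-plus-pigeonhole construction uniformly across both archimedean and nonarchimedean places of $S$; once the $\xi_{v_0}$ are in hand, the linear independence is a formal consequence of the sign pattern.
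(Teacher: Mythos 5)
The paper does not prove this theorem: it is quoted as the classical result with a citation to Narkiewicz (Theorem III.3.5) and then used as a black box in the proofs of Theorems \ref{thm:1}--\ref{thm:2} --- indeed the introduction explicitly notes that the paper's arguments ``rely on the classical $S$-unit theorem and thus do not provide an independent proof of Dirichlet's result.'' So there is no in-paper proof to compare against. Your sketch is the standard textbook argument: the product formula gives containment in the hyperplane; $\ker l = \Tor(U_{K,S})$ follows from Kronecker's theorem and Northcott gives discreteness of the image, hence the upper bound $\rank \leq \#S - 1$; and for the lower bound, Minkowski's convex body theorem in the $S$-adic Minkowski space $\prod_{v\in S} K_v$ plus a pigeonhole on the finitely many ideals of bounded norm produces, for each $v_0\in S$, an $S$-unit whose log vector has the sign pattern $\log\|\xi_{v_0}\|_{v} < 0$ for $v\neq v_0$ (hence $>0$ at $v_0$ by the product formula), and a diagonal-dominance lemma then forces the resulting $\#S\times\#S$ matrix of logs to have rank exactly $\#S-1$. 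This matches the cited reference in structure, and you correctly identify the geometry-of-numbers step as the technical core. To turn the outline into a full proof you would need to (i) state and prove the linear-algebra lemma about matrices with negative off-diagonal entries and zero row sums, and (ii) carry out the Minkowski construction carefully so that it handles the nonarchimedean $v\in S$ on the same footing as the archimedean ones; but as a plan it is sound.
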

\noindent In other words, Dirichlet's theorem tells us that:
\begin{enumerate}[(A)]
\item \label{enum:first-conclusion} Since all algebraic numbers \emph{a priori} satisfy the product formula, and hence the condition $\sum_v x_v = 0$ under the logarithmic embedding, the $S$-unit group of a number field $K$ spans the largest possible vector space it can.
\item \label{enum:second-conclusion} The algebraic rank of the $S$-unit group is equal to the real dimension of the vector space it spans. In particular, it is not larger, as in the example of $\bZ e\oplus \bZ \pi\subset \bR$, which has algebraic rank equal to $2$ because $e$ and $\pi$ are $\bQ$-linearly independent, but spans a space of real dimension 1. In other words, under the logarithmic embedding, any vectors which are $\bR$-linearly dependent must also be $\bQ$-linearly dependent; by Northcott's theorem we easily see that we can choose a $\bQ$-vector space basis which must also generate $U_{K,S}/\Tor(U_{K,S})$.
\end{enumerate}

This paper provides a generalization of Dirichlet's theorem to the infinite dimensional space of the group of all algebraic $S$-units for a given set of places $S$. Specifically, suppose $K$ is again a number field and $S$ a finite set of places containing all of the infinite places. We demonstrate that the group
\begin{multline}\label{eqn:UalgS}
 U^\alg_S = \{\al\in \Qbar^\times : \|\al\|_w=1\text{ for all }w\in M_{K(\al)}\\
 \text{ such that }\ w\nmid v\text{ for all }v\in S\}
\end{multline}
is, modulo its torsion subgroup, a normed vector space (written multiplicatively) which satisfies conclusions \eqref{enum:first-conclusion} and \eqref{enum:second-conclusion} above, namely, that it spans the largest possible hyperplane and that its vectors which are independent over the scalar field $\bQ$ retain their linear independence over $\bR$. Naturally, unlike the $S$-unit group of a number field, this algebraic $S$-unit group $U^\alg_S$ modulo torsion is of infinite rank and thus gives rise to an infinite dimensional vector space. Thus our choice of vector space norm is essential, which of course is not the case in finite dimension, where all norms are equivalent. The natural choice for a norm is the absolute logarithmic Weil height, and we will work in the completion with respect to the Weil height introduced by Allcock and Vaaler \cite{AV}. Finally, we will show that our result (like the classical Dirichlet theorem) is sharp in the sense that if one drops the assumption that $S$ contain all of the archimedean places, then the vector space spanned may be a proper subspace of the hyperplane determined by the product formula.

Our theorem statement is in fact strictly stronger than that of the classical $S$-unit theorem, as we will demonstrate below (see Remark \ref{rmk:implies-classical} below) that the classical $S$-unit theorem is an easy consequence of ``projecting down to $K$'' from the space $U^\alg_S$. However, our proof relies on the classical $S$-unit theorem and thus does not provide an independent proof of Dirichlet's result. Nevertheless, the proof involves several novel elements, including the use of results from functional analysis (for example, Lemma \ref{lemma:2}, for which we refer the reader to \cite{AV}, is proven using the Stone-Weierstrass theorem) and the use of projection operators, particularly the $P_S$ operator defined below in \eqref{eqn:PS-defn}, to approximate numbers with respect to the height. Projection operators associated to number fields also play a key role in \cite{FM-extremal} and \cite{FM-mahler}; however, while the operators used in \cite{FM-extremal, FM-mahler} are typically continuous extensions of well-defined maps $\Qbar^\times/\Tor(\Qbar^\times) \ra \Qbar^\times/\Tor(\Qbar^\times)$ (for example, for a number field $K/\bQ$, the $P_K$ operator is a scaled analogue of the algebraic norm down to $K$), the operator $P_S$ used in the proof here differs in that it is essentially an analytic object which is defined in the completion and does \emph{not} arise from any such map of algebraic numbers modulo torsion. But even though $P_S$ will not in general fix the whole space $\Qbar^\times/\Tor(\Qbar^\times)$, it will fix the vector space of $S$-units, and its continuity as a linear operator will be seen to be essential to establishing our main results.

\subsection{Function space formulation}
We now briefly recall the constructions of Allcock and Vaaler \cite{AV} which allow us to view the the group of algebraic numbers modulo torsion as a vector space normed by the absolute logarithmic Weil height. We refer the reader to \cite{AV, FM-mahler} for more details on the constructions used here. We will let $G=\Gal(\Qbar/\bQ)$ denote the absolute Galois group throughout. Let $\cK$ denote the collection of all number fields. Observe that $\cK$ is naturally a partially ordered set under containment. For each $K\in\cK$, we endow $M_K$ with the discrete topology. Then the collection of sets
$
 \{M_K : K\in\cK\}
$
naturally forms a projective system under inclusion with the projection maps for a pair of number fields $L/K$ given by mapping the place $w$ of $L$ to the place $v$ of $K$ which one obtains by restricting $\|\cdot\|_w$ to $K$. We form the topological space
\[
 Y = \ilim_{K\in\cK} M_K,
\]
which is a locally compact, totally disconnected Hausdorff space. Notice that the points of $Y$ exactly correspond to the places of $\Qbar$. The sets $Y(K,v)$ for any $K\in\cK$ and $v\in M_K$ given by
\begin{equation}
 Y(K,v) = \{y\in Y : y|v\}
\end{equation}
naturally form a basis for the topology of $Y$. The following result, the proof of which we will defer to \S \ref{section:2}, characterizes the sets of places which most naturally generalize the sets that occur in Dirichlet's theorem:
\begin{lemma}\label{lemma:S-iff-compact-open}
 A subset $S\subset Y$ of places of $\Qbar$ is compact open if and only if there exists a number field $K$ and a finite set of places $T\subset M_K$ such that
 \[
  S = \bigcup_{v\in T} Y(K,v).
 \]
\end{lemma}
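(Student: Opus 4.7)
My plan is to prove both implications using the description of $Y$ as a projective limit of discrete spaces, with the key technical point being the compactness of each basic open set $Y(K,v)$.

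First I would handle the direction $(\Leftarrow)$. Given $K$ and a finite $T\subset M_K$, the set $\bigcup_{v\in T}Y(K,v)$ is open since each $Y(K,v)$ belongs to our chosen basis, so it remains to establish compactness, which in turn reduces to showing that each $Y(K,v)$ is compact. To see this, note that for any $L\supset K$ there are only finitely many places of $L$ lying over $v$ (at most $[L:K]$ of them), so $Y(K,v)$ is naturally identified with the projective limit $\ilim_{L\supset K}\{w\in M_L : w\mid v\}$ of finite discrete sets, and is therefore compact by Tychonoff's theorem. A finite union of compact sets is compact, giving one direction.

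For the direction $(\Rightarrow)$, suppose $S$ is compact open. Since $S$ is open and the sets $Y(K,v)$ form a basis, we may write $S$ as a union of such basic open sets; by compactness a finite subcollection suffices, so $S=\bigcup_{i=1}^n Y(K_i,v_i)$ for some number fields $K_i$ and places $v_i\in M_{K_i}$. The apparent obstacle is that the various $K_i$ are different, and I would resolve this by taking $K$ to be their compositum, which is still a number field. For each $i$, transitivity of place restriction gives the identity
\[
 Y(K_i,v_i) \;=\; \bigcup_{\substack{w\in M_K\\ w\mid v_i}} Y(K,w),
\]
a union indexed by the finitely many places of $K$ above $v_i$: the $\supset$ inclusion is immediate, and $\subset$ holds because any $y\in Y(K_i,v_i)$ restricts to some $w\in M_K$, and this $w$ in turn restricts to $v_i$ on $K_i$. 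Substituting these finite unions into the expression for $S$ yields $S=\bigcup_{v\in T}Y(K,v)$ for a finite $T\subset M_K$, as desired.

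The only genuine conceptual point is the compactness of $Y(K,v)$, which reflects the fact that the places of $\Qbar$ above any fixed $v\in M_K$ form a profinite-type object (on which $G$ acts transitively through decomposition groups); everything else is a straightforward manipulation of the basis combined with the compositum trick in the second direction.
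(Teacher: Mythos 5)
Your proof is correct, and your argument for the forward direction (compact open implies the finite-union form) is essentially the same as the paper's: cover $S$ by basic opens, extract a finite subcover by compactness, pass to the compositum of the fields appearing, and rewrite each basic open in terms of places of the compositum. The paper does not explicitly prove the reverse direction, treating the compactness of $Y(K,v)$ as part of the background from Allcock--Vaaler; your observation that $Y(K,v)$ is the projective limit over $L\supseteq K$ of the finite discrete fibers $\{w\in M_L: w\mid v\}$ is a correct and clean way to supply that detail.
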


We associate to each equivalence class of a nonzero algebraic number $\al$ modulo torsion (that is, the roots of unity) a function $\fal$ given by
\begin{equation}
\begin{aligned}
 \fal : Y &\ra \bR\\
  y &\mapsto \log \|\al\|_y.
\end{aligned}
\end{equation}
As each $\al$ has a finite number of places where it has a nontrivial valuation, the functions $\fal$ are compactly supported. They are also locally constant on an appropriate collection of sets of the form $Y(K,v)$ (where, say, $\al\in K^\times$), and therefore they are continuous on $Y$. Recall that, as a divisible torsion-free abelian group, the group $\Qbar^\times/\Tor(\Qbar^\times)$ is naturally a vector space, written multiplicatively, over $\bQ$. The map
\begin{equation}\label{eqn:phi-function}
\begin{aligned}
 \phi: \Qbar^\times/\Tor(\Qbar^\times) &\hookrightarrow C_c(Y)\\
  \al &\mapsto \fal
\end{aligned}
\end{equation}
sending each equivalence class modulo torsion to the associated function is then a vector space isomorphism onto its image. We denote the image of $\phi$ by $\cF$, that is,
\begin{equation}
 \cF = \{\fal\in C_c(Y) : \al\in \Qbar^\times/\Tor(\Qbar^\times)\}.
\end{equation}
We will drop the subscript $\al$ on elements $f\in\cF$ when convenient; the reader should bear in mind that each $f\in \cF$ corresponds uniquely to some equivalence class $\al\in \Qbar^\times/\Tor(\Qbar^\times)$. Lastly, to each number field $K\in\cK$ we associate the $\bQ$-vector space which $K^\times/\Tor(K^\times)$ spans:
\begin{equation}
 V_K = \{\fal : \al^n \in K^\times/\Tor(K^\times)\text{ for some }n\in\bN\}\subset \cF.
\end{equation}
Each $V_K$ has a canonical projection $P_K:\cF\ra V_K$ associated to it which is defined and studied in \cite[\S 2.3]{FM-mahler}, to which we refer the reader. Continuous projection maps will play an important role in the proofs of our main theorems.

Allcock and Vaaler construct a Borel measure $\lambda$ on $Y$ such that for each number field $K$ and place $v$ of $K$,
\begin{equation}
 \lambda(Y(K,v)) = \frac{[K_v:\bQ_v]}{[K:\bQ]}.
\end{equation}
The $L^1$ norm of such a function with respect to this measure is then precisely twice its absolute logarithmic Weil height:
\begin{equation}
 2h(\al) = \sum_{v\in M_K} \frac{[K_v:\bQ_v]}{[K:\bQ]} \abs{\log \|\al\|_v} = \int_Y \abs{f(y)}\,d\lambda(y) = \|\fal\|_1
\end{equation}
(where we assume without loss of generality $\al\in K^\times$ for some number field $K$). The product formula now takes the form
\begin{equation}
 \int_Y \fal(y)\,d\lambda(y)=0,
\end{equation}
and thus the space $\cF$ naturally sits inside the vector space
\[
 \{f\in C_c(Y) : \int f \,d\lambda = 0\}
\]
which, with appropriate restrictions on the support of $f$, will form the natural generalization of the space $\{x\in \bR^S :\sum_v x_v = 0\}$ from the $S$-unit theorem.

The $L^p$ norms for $1\leq p\leq\infty$, which we term the \emph{$L^p$ Weil heights}, naturally make $\cF$ a normed vector space for which we may ask if the space of algebraic $S$-units $U^\alg_S$ is dense in its appropriate codimension $1$ hyperplane, thus satisfying the analogue of part \eqref{enum:first-conclusion} of Dirichlet's theorem. To this end we note that Allcock and Vaaler \cite[Theorems 1-3]{AV} determined the completions of $\cF$ with respect to the $L^p$ norm, which we denote $\cF_p$:
\begin{equation}\label{eqn:completionsFp}
 \cF_p = \begin{cases}
 \{f\in L^1(Y,\lambda) : \int_Y f\,d\lambda =0\} & \text{if }p=1,\\
 L^p(Y,\lambda) & \text{if }1<p<\infty,\\
 C_0(Y) & \text{if }p=\infty.
 \end{cases}
\end{equation}
where $C_0(Y)$ denotes the usual space of continuous functions which vanish at infinity.

\subsection{Main results}
Our main results are the following three theorems, which first provide a generalization of both parts of Dirichlet's result to the larger group of all algebraic $S$-units, and then demonstrate that if the assumption that the archimedean places are included in $S$ is discarded then the space spanned by $\cF(S)$ may in fact be a proper subspace of the hyperplane determined by the product formula.
\begin{thm}\label{thm:1}
 Let $S\subset Y$ be a compact open set containing all of the archimedean places. Then the vector space of algebraic $S$-units
\begin{equation}\label{eqn:VS-space}
 \cF(S) = \{f\in\cF : \supp(f)\subseteq S\}
\end{equation}
 is dense in the closed vector space
\begin{equation}
\cF_p(S,0)=\{f\in\cF_p : \supp(f)\subseteq S\text{ and }\int_S f = 0\}
\end{equation}
 for all $1\leq p\leq \infty$, where $\cF_p$ denotes the completion of $\cF$ under the $L^p$ norm.
\end{thm}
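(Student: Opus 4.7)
The approach is to combine the classical Dirichlet $S$-unit theorem, applied at each finite level $L\supseteq K$, with a density argument for locally constant functions on the totally disconnected compact space $S$.

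By Lemma~\ref{lemma:S-iff-compact-open}, I would first write $S=\bigcup_{v\in T}Y(K,v)$ for some $K\in\cK$ and finite $T\subset M_K$; the hypothesis that $S$ contains every archimedean place of $\Qbar$ forces $T$ to contain every archimedean place of $K$. For each $L\supseteq K$, let $T_L=\{w\in M_L:w|v\text{ for some }v\in T\}$, a finite set again containing every archimedean place of $L$. Classical Dirichlet, transported across the rescaling $l(\xi)_w=\lambda(Y(L,w))\log\|\xi\|_w\leftrightarrow \phi(\xi)|_{Y(L,w)}=\log\|\xi\|_w$, says that $\phi(U_{L,T_L}/\Tor)$ is a rank-$(\#T_L-1)$ lattice spanning (over $\bR$) the finite-dimensional hyperplane
\[
H_L=\bigl\{g:\ g\text{ constant on each }Y(L,w)\text{ for }w\in T_L,\ \supp g\subseteq S,\ \textstyle{\int} g\,d\lambda=0\bigr\}.
\]
Because the $\bQ$-span of a rank-$n$ lattice in $\bR^n$ is dense in $\bR^n$, and all norms on a finite-dimensional space are equivalent, the $\bQ$-span of $\phi(U_{L,T_L})$, which is contained in $\cF(S)$, is dense in $H_L$ with respect to every $L^p$ norm.

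Next, as $L$ ranges over number fields containing $K$, the partitions $\{Y(L,w):w\in T_L\}$ refine one another, and by Lemma~\ref{lemma:S-iff-compact-open} every compact open subset of $S$ is a finite union of $Y(L,w)$ for some sufficiently large $L$. Consequently $\bigcup_{L\supseteq K}H_L$ is precisely the space of locally constant real-valued functions supported in $S$ with $\lambda$-integral zero; by the previous paragraph, $\cF(S)$ is $L^p$-dense in this space. It remains to show this space is itself dense in $\cF_p(S,0)$. For $p=\infty$ this is Stone--Weierstrass applied to the subalgebra of locally constant functions on the compact, totally disconnected, Hausdorff space $S$, which contains the constants and separates points. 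For $1\leq p<\infty$ it follows from density of simple functions in $L^p(Y,\lambda)$ together with the fact that the compact opens $Y(L,w)$ generate the Borel $\sigma$-algebra on $S$. In either case, if $g$ is a locally constant approximation of $f\in\cF_p(S,0)$, then $\widetilde{g}:=g-\lambda(S)^{-1}(\int g\,d\lambda)\mathbf{1}_S$ is again locally constant, supported in $S$, has integral zero, and (by H\"older, using $\int f\,d\lambda=0$) satisfies $\|\widetilde{g}-g\|_p\leq\|g-f\|_p$, so the total approximation error $\|\widetilde{g}-f\|_p$ is controlled by $2\|g-f\|_p$.

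The only real technical nuisance is preserving the mean-zero condition after approximation uniformly in $p$, which the constant shift above handles. All the substantive mathematical content is concentrated in the appeal to classical Dirichlet at each finite level $L$, together with the elementary observation that rational combinations of lattice vectors fill out the ambient real hyperplane densely — everything else is routine density on a totally disconnected space.
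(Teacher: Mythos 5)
Your proof is correct and takes essentially the same approach as the paper: both reduce to the classical Dirichlet $S$-unit theorem applied at each finite level $L \supseteq K$, use $L^p$-density of (compactly supported) locally constant functions, and repair the mean-zero constraint after approximation by subtracting the appropriate multiple of $\chi_S$ --- a step the paper packages as the continuous projection $P_S$ and you treat ad hoc with the same H\"older estimate. The only cosmetic differences are that the paper routes through the intermediate space $C(S,0)$ using Allcock--Vaaler's Stone--Weierstrass lemma uniformly for all $p$, whereas you collapse that step and argue $p=\infty$ via Stone--Weierstrass and $p<\infty$ via density of simple functions; both are routine.
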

\noindent Theorem \ref{thm:1} generalizes part \eqref{enum:first-conclusion} of Dirichlet's theorem as, by Lemma \ref{lemma:S-iff-compact-open}, 
\begin{equation}
\cF(S) = \phi(U^\alg_T)
\end{equation}
for some number field $K$ and a finite subset $T\subset M_K$ containing all of the archimedean places, where $U^\alg_T$ is defined as in \eqref{eqn:UalgS} and $\phi$ as in \eqref{eqn:phi-function}, so $\cF(S)$ is indeed the vector space (modulo torsion) of algebraic $S$-units. We note that by \eqref{eqn:completionsFp},
\[
 \cF_p(S,0) = \begin{cases}
               \{f\in L^p(Y) : \supp(f)\subseteq S\text{ and }\int_S f = 0\} & \text{ if }1\leq p<\infty,\\
               \{f\in C_0(Y) : \supp(f)\subseteq S\text{ and }\int_S f = 0\} & \text{ if }p=\infty.
              \end{cases}
\]

Regarding the $\bR$-linear independence of $\bQ$-linearly independent vectors, that is, conclusion \eqref{enum:second-conclusion} of Dirichlet's theorem, we prove the following:
\begin{thm}\label{thm:Rdep-Qdep}
 Let $S\subset Y$ be a compact open set containing all of the archimedean places and let $\cF(S)$ be the vector space of $S$-units defined above in \eqref{eqn:VS-space}. Let $f^{(1)},\ldots, f^{(n)}\in \cF(S)$ and suppose there exist elements $r_1,\ldots, r_n\in \bR$, not all zero, such that
 $
  \sum_{i=1}^n r_i f^{(i)} = 0.
 $
 Then there exist $s_1,\ldots,s_n\in\bQ$ such that
 $
  \sum_{i=1}^n s_i f^{(i)} = 0.
 $
\end{thm}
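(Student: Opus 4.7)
The plan is to reduce the statement to a finite-dimensional linear algebra problem over a discrete lattice, to which the classical Dirichlet $S$-unit theorem then applies directly.

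First, using Lemma \ref{lemma:S-iff-compact-open}, write $S = \bigcup_{v\in T} Y(K,v)$ for some number field $K$ and finite set $T \subset M_K$ containing all archimedean places. Each $f^{(i)}$ equals $\fal$ for some $\alpha_i \in \Qbar^\times/\Tor(\Qbar^\times)$; by choosing representatives in $\Qbar^\times$ and passing to a compositum, enlarge $K$ to a number field $L$ in which all $\alpha_i$ have representatives. Let $T' = \{w \in M_L : w \mid v \text{ for some } v \in T\}$; this is a finite set of places of $L$ containing all archimedean places, and $S = \bigsqcup_{w \in T'} Y(L,w)$. Since $\supp f^{(i)} \subseteq S$, each $\alpha_i$ is a classical $T'$-unit of $L$, and $f^{(i)}$ is locally constant on the above partition with value $\log \|\alpha_i\|_w$ on $Y(L,w)$. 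Thus $f^{(i)}$ is determined by the vector $v_i = (\log \|\alpha_i\|_w)_{w\in T'} \in \bR^{T'}$, and the hypothesis $\sum_i r_i f^{(i)} = 0$ is equivalent to $\sum_i r_i v_i = 0$ in $\bR^{T'}$.

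Next, apply the classical Dirichlet theorem to $U_{L,T'}$: the unnormalized logarithmic embedding $\alpha \mapsto (\log \|\alpha\|_w)_{w\in T'}$, which differs from the embedding recalled in the Introduction only by a diagonal scaling, maps $U_{L,T'}/\Tor(U_{L,T'})$ injectively onto a discrete subgroup $\Lambda \subset \bR^{T'}$ of rank $\#T' - 1$ spanning the product-formula hyperplane. In particular, each $v_i$ lies in $\Lambda$. Choose a $\bZ$-basis $e_1, \ldots, e_d$ of $\Lambda$; since $\Lambda$ is a full lattice in its real span, the $e_k$ are automatically $\bR$-linearly independent in $\bR^{T'}$. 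Writing $v_i = \sum_k a_{ik} e_k$ with $a_{ik} \in \bZ$, the assumption $\sum_i r_i v_i = 0$ becomes $\sum_i r_i a_{ik} = 0$ for each $k$, so the real row vector $(r_1, \ldots, r_n)$ lies in the left null space of the integer matrix $A = (a_{ik})$.

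The proof concludes with the elementary observation that the rank of an integer (equivalently, rational) matrix is the same over $\bQ$ as over $\bR$. Since $A$ admits a nonzero real left null vector, it admits a nonzero rational one, yielding $(s_1, \ldots, s_n) \in \bQ^n \setminus \{0\}$ with $\sum_i s_i a_{ik} = 0$ for all $k$; retracing the identifications gives $\sum_i s_i v_i = 0$ and hence $\sum_i s_i f^{(i)} = 0$ in $\cF$. I do not anticipate a substantive obstacle: the argument's only content beyond routine lattice linear algebra is the reduction, via Lemma \ref{lemma:S-iff-compact-open} and the compositum trick, from the infinite-rank space $\cF(S)$ to a finite-rank discrete lattice of classical $S$-units inside a single number field.
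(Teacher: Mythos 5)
Your proposal is correct and follows essentially the same approach as the paper: reduce to a single number field (the paper uses $K=\bQ(\alpha_1,\ldots,\alpha_n)$ directly, you enlarge via a compositum), observe that the $f^{(i)}$ land in the lattice of classical $S$-units under the logarithmic embedding, and then invoke Dirichlet's $S$-unit theorem to convert the $\bR$-linear dependence into a $\bQ$-linear one. The paper compresses the final step into the phrase ``by the discreteness of the lattice $l(U_{K,T})$,'' whereas you unpack it into the elementary statement that the rank of a rational matrix is independent of whether one works over $\bQ$ or $\bR$; this is a welcome clarification but not a different argument.
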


\begin{rmk}\label{rmk:implies-classical}
We note that the classical $S$-unit theorem can be easily obtained assuming the statements of Theorems \ref{thm:1} and \ref{thm:Rdep-Qdep}. To see this, recall from \cite{FM-mahler} that the projection operator $P_K : \cF \ra V_K$ defined in \cite[\S 2.3]{FM-mahler} is a conditional expectation, and thus maps functions supported on $S$ to themselves, and it also acts as the identity on $V_{K,S}$, and thus $P_K(\cF(S))=V_{K,S}$, the $\bQ$-vector space span of the $S$-units of $K$. Since $P_K$ is continuous, Theorem \ref{thm:1} implies that $V_{K,S}$ spans the hyperplane of functions supported on $S$ and being locally constant on the sets $Y(K,v)$, which is part \eqref{enum:first-conclusion} of the classical theorem for the $S$-units of $K$, and since $P_K$ is a linear operator, the conclusion of Theorem \ref{thm:Rdep-Qdep} implies that the elements of of $V_{K,S}$ which are $\bR$-linearly dependent are also $\bQ$-linearly dependent, fulfilling conclusion \eqref{enum:second-conclusion} of the classical $S$-unit theorem and recovering the classical result.
\end{rmk}

As we noted above, our proof uses the classical result, and thus we must include the archimedean places, which consist precisely of the set $Y(\bQ,\infty)$, in our set $S$. In fact, it is not possible to entirely remove this assumption, and Theorem \ref{thm:1} is sharp in the following precise sense. Say $S$ is \emph{defined} over a field $K$ if we can write $S$ as a union of sets $Y(K,v)$ where $v$ ranges \emph{only} over places of $K$. Thus for example if $v$ is an infinite place of a real quadratic number field $K$ then $S=Y(K,v)$ is defined over $K$ and any extension thereof but not defined over $\bQ$ since $S\subsetneqq Y(\bQ,\infty)$. Then we have:
\begin{thm}\label{thm:2}
 Let $S\subset Y$ be a compact open set such that $S\cap Y(\bQ,\infty)=\varnothing$. Let $S$ be defined over a number field $K$ which has at least one real archimedean place and suppose at least two places $v\neq w\in M_K$ are such that $Y(K,v)\cup Y(K,w)\subseteq S$. Then the closure of the $S$-units space $\cF(S)$ in the $L^p$ norm is a proper subspace of $\cF_p(S,0)$.
\end{thm}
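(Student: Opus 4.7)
The plan is to construct a continuous linear functional on $\cF_p(S,0)$ that vanishes on $\cF(S)$ while being nonzero on an explicit element, which immediately yields $\overline{\cF(S)}\subsetneq\cF_p(S,0)$. Using the two places $v,w\in M_K$ supplied by the hypothesis, I take
\[
 \Lambda:\cF_p(S,0)\to\bR,\qquad \Lambda(f)=\int_{Y(K,v)}f\,d\lambda.
\]
Because $\chi_{Y(K,v)}$ is bounded and supported on a set of finite $\lambda$-measure, H\"older's inequality shows $\Lambda$ is continuous for each $p\in[1,\infty]$. Moreover, the locally constant test function
\[
 g=\lambda(Y(K,w))\,\chi_{Y(K,v)}-\lambda(Y(K,v))\,\chi_{Y(K,w)}
\]
is compactly supported in $S$, satisfies $\int_S g\,d\lambda=0$, and $\Lambda(g)=\lambda(Y(K,v))\lambda(Y(K,w))>0$; so $g\in\cF_p(S,0)\setminus\ker\Lambda$. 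It therefore suffices to prove $\Lambda|_{\cF(S)}\equiv 0$.

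The core of the argument is a Kronecker-style observation that simultaneously exploits the two remaining hypotheses: $S\cap Y(\bQ,\infty)=\varnothing$ and $K$ has a real place. Given $\fal\in\cF(S)$, choose a number field $L\supseteq K$ with $\al\in L$ and set $\beta=N_{L/K}(\al)\in K^\times$. The condition $\supp(\fal)\subseteq S$ combined with $S\cap Y(\bQ,\infty)=\varnothing$ forces $\|\al\|_u=1$ at every archimedean place $u$ of $L$. Pick a real embedding $\sigma_0:K\hookrightarrow\bR$ (available by hypothesis). Its $[L:K]$ extensions to $\bC$ biject with the archimedean completions of $L$ lying above $\sigma_0$, so
\[
 \sigma_0(\beta)=\prod_{\tau:L\to\bC,\;\tau|_K=\sigma_0}\tau(\al)
\]
is a product of complex numbers each of modulus $1$, giving $|\sigma_0(\beta)|=1$. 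Since $\sigma_0(\beta)\in\bR$, we conclude $\sigma_0(\beta)=\pm 1$, and the injectivity of $\sigma_0$ then forces $\beta=\pm 1$ in $K$. In particular, $\log\|\beta\|_{v'}=0$ for every $v'\in M_K$.

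Finally I translate this back into the integral. Decomposing $Y(K,v)=\bigsqcup_{w\mid v}Y(L,w)$ and using the local degree identity $[L_w:\bQ_w]=[L_w:K_v]\,[K_v:\bQ_p]$ (with $p$ the rational prime under $v$), a straightforward unwinding of the Allcock--Vaaler measure gives
\[
 \int_{Y(K,v)}\fal\,d\lambda=\frac{\lambda(Y(K,v))}{[L:K]}\,\log\|\beta\|_v=0.
\]
Hence $\Lambda$ vanishes on $\cF(S)$, and by continuity on $\overline{\cF(S)}$; combined with $\Lambda(g)\neq 0$ this gives the theorem. The step requiring the most care is the Kronecker implication $|\sigma_0(\beta)|=1\Rightarrow\beta=\pm 1$, which relies essentially on $\sigma_0$ being a real embedding: a complex embedding only yields $|\sigma_0(\beta)|=1$ (which does not pin $\beta$ down), and standard constructions like ratios $u/\bar u$ in imaginary quadratic fields show that the functional $\Lambda$ genuinely fails to vanish on $\cF(S)$ once one drops the real-place hypothesis—precisely the sharpness reflected in the statement.
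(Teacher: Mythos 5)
Your proof is correct, and it takes a genuinely different route from the paper on both key steps. First, to show that $\cF(S)$ is annihilated by a suitable operator/functional, the paper proves (its Lemma 2.11) that $V_{K,S}=\{0\}$ by choosing generators $\al_v$ for $\cP_v^h$ (via the class number $h$) together with a fundamental system of units $\xi_i$, expanding an arbitrary $f\in V_{K,S}$ in these, and deriving a contradiction at a real place; by contrast, you sidestep the class-number and unit-group bookkeeping entirely and observe that for \emph{any} $\fal\in\cF(S)$ with $\al\in L\supseteq K$, the constraint $\supp(\fal)\cap Y(\bQ,\infty)=\varnothing$ forces all archimedean absolute values of $\al$ to be $1$, so a real embedding $\sigma_0$ of $K$ gives $\sigma_0(N_{L/K}\al)=\pm1$ and hence $N_{L/K}(\al)=\pm1$. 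This directly yields $\int_{Y(K,v)}\fal\,d\lambda=0$ for every $v$ with $Y(K,v)\subseteq S$, which is exactly the statement $P_K(\cF(S))=\{0\}$ (and in particular recovers $V_{K,S}=\{0\}$ by taking $L=K$), obtained more conceptually than the paper's prime-ideal decomposition. Second, to finish, the paper applies the norm-one projection $P_K$: it picks a nonzero $f\in LC(S,0,K)$, and if $g\in\cF(S)$ approximates $f$ then $P_Kg=0$ forces $\|f\|_p$ to be small, a contradiction; you instead exhibit the continuous functional $\Lambda=\int_{Y(K,v)}(\cdot)\,d\lambda$ that annihilates $\cF(S)$ and is nonzero on an explicit $g\in\cF_p(S,0)$, which is the dual (Hahn--Banach-style) formulation of the same obstruction. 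The net effect is the same, but your Kronecker-at-a-real-place argument is shorter and isolates the arithmetic input more cleanly than the paper's factorization-into-$\al_v$'s-and-units.

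One small inaccuracy of phrasing worth fixing: the $[L:K]$ extensions of $\sigma_0$ to embeddings $\tau:L\to\bC$ do not biject with archimedean places of $L$ over $\sigma_0$ (complex-conjugate pairs give the same place); what you actually use is the standard identity $\sigma_0(N_{L/K}\al)=\prod_{\tau|_K=\sigma_0}\tau(\al)$ together with $|\tau(\al)|=\|\al\|_{u_\tau}=1$ for the place $u_\tau$ determined by $\tau$, which is correct. You also rely on the local-degree bookkeeping $\int_{Y(K,v)}\fal\,d\lambda=\frac{\lambda(Y(K,v))}{[L:K]}\log\|N_{L/K}\al\|_v$, which follows from $\lambda(Y(L,w))=\frac{[L_w:\bQ_{v_0}]}{[L:\bQ]}$ and $\|N_{L/K}\al\|_v=\prod_{w\mid v}\|\al\|_w^{[L_w:K_v]}$; this is fine and worth stating explicitly.
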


\begin{example}
As an example, let $S=Y(\bQ,2)\cup Y(\bQ,3)$. Then the vector space $\cF(S)$ consists of nonzero algebraic numbers which along with all of their conjugates in the complex plane lie on the unit circle and possess nontrivial valuations only on places lying over $2$ and $3$. We note that $S$ is in fact defined over $\bQ$ and thus Theorem \ref{thm:2} applies, so the space $\cF(S)$ is not dense in $\cF_p(S,0)$. In fact, if we restrict to elements of $\cF(S)$ which arise from $\bQ$, then this result is clear: there is no element of $\bQ^\times$ which can have a positive valuation over $2$ and a negative valuation over $3$ and vanish on the infinite place by the $\bQ$-linear independence of $\log 2$ and $\log 3$, while in the completion $\cF_p(S,0)$ we can easily construct such a function as we have no such number theoretic obstructions. The idea of the proof of Theorem \ref{thm:2} is that by using projections, we can demonstrate that it is impossible that some element supported on $S$ from an extension of $\bQ$ can make up for these missing functions in attempting to approximate elements of $\cF_p(S,0)$, and thus the closure of $\cF(S)$ is a proper subspace.
\end{example}

Lastly, we demonstrate by the next example that even in the case where the vector space of $S$-units spans a proper subspace of the hyperplane $\cF_p(S,0)$, the space need not be trivial.
\begin{example}
Let $S=Y(\bQ,5)\cup Y(\bQ,7)$. By Theorem \ref{thm:2}, the space $\cF(S)$ is not dense in $\cF_p(S,0)$ for any $1\leq p\leq\infty$. Let $K=\bQ(i)$ where $i^2=-1$, and let $v$ be the nonarchimedean place of $K$ associated to the prime ideal $(2-i)$ and $w$ the place associated to $(2+i)$. Notice that since $5=(2-i)(2+i)$, we have $Y(\bQ,5)=Y(K,v)\cup Y(K,w)\subseteq S$. Let $\al=(2-i)/(2+i)\in K^\times$, and observe that $\fal$ is supported only on $Y(\bQ,5)\subseteq S$ and therefore $\fal\in \cF(S)$. Thus, $\cF(S)\neq \{0\}$ is nontrivial, as claimed, although it is not dense in $\cF_p(S,0)$.
\end{example}

\section{Proofs}\label{section:2}
\subsection{Compact open sets of places and Proof of Lemma \ref{lemma:S-iff-compact-open}}\label{section:2.1}
In this section we prove Lemma \ref{lemma:S-iff-compact-open}, as well as some other results which are not essential to the proofs of our main theorems but are of interest in themselves.
\begin{proof}[Proof of Lemma \ref{lemma:S-iff-compact-open}]
Our proof follows the same approach as in Lemma 1 of \cite{AV}. Since $S$ is open in $Y$, we may choose for each $y\in S$ a number field $L^{(y)}$ such that $y$ lies over the place $v^{(y)}$ of $L^{(y)}$ and $Y(L^{(y)},v^{(y)})\subset S$. Notice that the collection $\{Y(L^{(y)},v^{(y)}): y\in S\}$ is an open cover of the compact set $S$ and as such it possesses a finite subcover,
\[
 \{ Y(L_n,v_n) \}_{n=1}^N
\]
 where the $L_n$ are number fields and $v_n$ is a place of $L_n$. Let $L=L_1\cdots L_N$ be the compositum. Then clearly each set $Y(L_n,v_n)$ is a finite union of $Y(L,w)$ where $w$ ranges over the places of $L$ which lie over $v_n$. Thus, we can select finitely many places $w_i$ of $L$ such that $S=\bigcup_i Y(L,w_i)$ and we have the desired result.
\end{proof}
Although Lemma \ref{lemma:S-iff-compact-open} suffices for our purposes, with only a little more work one can see that there exists a unique minimal field of definition of any compact open $S$. We will now demonstrate this result. Recall that $G$ acts on the set $Y$ in a well-defined manner given by $\|\al\|_{\sigma{y}} =\|\sigma^{-1}\al\|_y$ (see \cite{AV} for more details). We make the following definition:
\begin{defn}
 Let $S\subset Y$ be a compact open set, and let $K$ be a number field. We say $S$ is \emph{defined over $K$} if there exists a set $T\subset M_K$ (necessarily finite) such that
 \[
  S = \bigcup_{v\in T} Y(K,v).
 \]
\end{defn}
\noindent Notice that by Lemma \ref{lemma:S-iff-compact-open} there always exists \emph{some} number field $K$ such that $S$ is defined over $K$.
\begin{lemma}\label{lemma:S-over-K-characterization}
 A compact open set of places $S$ is defined over a number field $K$ if and only if for any place $v\in M_K$ such that $S\cap Y(K,v)\neq \varnothing$ we have $Y(K,v)\subseteq S$.
\end{lemma}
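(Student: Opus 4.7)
The plan is to prove both implications separately, with the forward direction being essentially a disjointness observation and the reverse direction leveraging Lemma \ref{lemma:S-iff-compact-open} to pass from an arbitrary field of definition down to $K$.

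For the forward direction, I would first note that for distinct places $v, v' \in M_K$, the sets $Y(K,v)$ and $Y(K,v')$ are disjoint, since any $y \in Y$ restricts to a single place of $K$. Then if $S = \bigcup_{v \in T} Y(K,v)$ for some finite $T \subseteq M_K$, and $w \in M_K$ satisfies $S \cap Y(K,w) \neq \varnothing$, there must exist some $v \in T$ with $Y(K,v) \cap Y(K,w) \neq \varnothing$, forcing $w = v \in T$ and hence $Y(K,w) \subseteq S$.

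For the reverse direction, assume the stated condition holds. By Lemma \ref{lemma:S-iff-compact-open}, we may write $S = \bigcup_{w \in T} Y(L,w)$ for some number field $L$ and finite $T \subseteq M_L$; by replacing $L$ with the compositum $KL$ and refining $T$ accordingly (splitting each $Y(L,w)$ into finitely many pieces over $KL$), we may assume $K \subseteq L$. Now for each $w \in T$ let $v_w \in M_K$ denote the restriction of $w$ to $K$, so that $Y(L,w) \subseteq Y(K,v_w)$. Set $T_K = \{v_w : w \in T\}$, which is finite. I would then verify $S = \bigcup_{v \in T_K} Y(K,v)$ by a two-sided containment: the inclusion ``$\subseteq$'' is immediate from $Y(L,w) \subseteq Y(K,v_w)$, while for ``$\supseteq$'', each $v \in T_K$ arises as $v = v_w$ for some $w \in T$, giving $\varnothing \neq Y(L,w) \subseteq S \cap Y(K,v)$, and then the hypothesis upgrades this to $Y(K,v) \subseteq S$.

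The proof is essentially bookkeeping about the restriction map $M_L \to M_K$; the only subtlety I anticipate is making sure one can enlarge the field of definition from $L$ to $KL$, which reduces to the observation that $Y(L,w)$ is a finite disjoint union of sets of the form $Y(KL, w')$ as $w'$ ranges over places of $KL$ above $w$. With that in hand, the rest of the argument is a clean application of the hypothesis.
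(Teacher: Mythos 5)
Your proof is correct, and both directions are argued soundly, but the reverse direction takes a different route from the paper's. You invoke Lemma \ref{lemma:S-iff-compact-open} to first realize $S$ as a union of $Y(L,w)$ over some auxiliary field $L$, then pass to the compositum $KL$ to ensure $K \subseteq L$, then push the places down to $K$ via the restriction map and apply the hypothesis to conclude. The paper instead works entirely at the level of $K$: it sets $T = \{v \in M_K : S \cap Y(K,v) \neq \varnothing\}$, observes that $\{Y(K,v) : v \in T\}$ is an open cover of the compact set $S$ (since the $Y(K,v)$ for $v\in M_K$ partition $Y$), deduces that $T$ is finite, and notes that the hypothesis forces each $Y(K,v)$ with $v \in T$ to lie inside $S$, giving $S = \bigcup_{v\in T} Y(K,v)$ directly. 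The paper's argument is shorter and avoids the compositum; yours buys a little extra explicitness in exchange for routing through Lemma \ref{lemma:S-iff-compact-open} and the restriction map $M_{KL} \to M_K$, which the paper prefers to leave implicit since compactness of $S$ is already in hand.
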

\begin{proof}
 If $S$ is defined over $K$, this is obvious from the definition and the fact that the sets $Y(K,v)$ are disjoint from each other for different places $v$. Suppose now that the conclusion holds. Define
 \[
  T=\{v\in M_K : S\cap Y(K,v)\neq \varnothing \}.
 \]
 Since $Y=\bigcup_{v\in M_K} Y(K,v)$, the collection $\{Y(K,v) : v\in T\}$ forms an open cover of $S$. By compactness, $T$ must therefore be finite, and $S$ must in fact be the union of the sets $Y(K,v)$ for $v\in T$.
\end{proof}
\begin{prop}
 Let $S$ be a compact open set. Then $S$ is defined over $K$ if and only if $K$ is a finite extension of the field 
 \begin{equation}
 k=\Fix(\Stab_G(S))
 \end{equation}
 where $\Stab_G(S)=\{\sigma\in G : \sigma(S)=S\}$ and $\Fix(\cdot)$ denotes the usual fixed field of a subgroup of the absolute Galois group.
\end{prop}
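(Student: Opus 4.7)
The plan is to invoke the infinite Galois correspondence to convert the algebraic condition $k\subseteq K$ into the topological statement $\Gal(\Qbar/K)\subseteq\Stab_G(S)$, and then argue each direction separately. The prerequisite is that $\Stab_G(S)$ be closed in the Krull topology on $G$; I will show it is in fact open. By Lemma~\ref{lemma:S-iff-compact-open}, fix some number field $L$ and finite $T\subset M_L$ with $S=\bigcup_{v\in T}Y(L,v)$. For any $\sigma\in\Gal(\Qbar/L)$ and $y\in Y(L,v)$, the defining identity $\|\alpha\|_{\sigma y}=\|\sigma^{-1}\alpha\|_y$ combined with $\sigma^{-1}\alpha=\alpha$ for $\alpha\in L$ shows that $\sigma y$ restricts to the same place $v$ of $L$; applying this also to $\sigma^{-1}$ gives $\sigma(Y(L,v))=Y(L,v)$, and hence $\sigma(S)=S$. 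Thus $\Gal(\Qbar/L)\subseteq\Stab_G(S)$, so $\Stab_G(S)$ contains an open subgroup of $G$ and is therefore itself open (and closed). The infinite Galois correspondence then identifies $\Gal(\Qbar/k)$ with $\Stab_G(S)$, and since $k\subseteq L$, $k$ is a number field.

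For the forward direction, assume $S=\bigcup_{v\in T'}Y(K,v)$ for a finite $T'\subset M_K$. The same Galois-action computation, with $K$ in place of $L$, yields $\Gal(\Qbar/K)\subseteq\Stab_G(S)=\Gal(\Qbar/k)$, hence $k\subseteq K$; and since $K$ is a number field, $[K:k]\leq[K:\bQ]<\infty$.

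Conversely, if $K$ is a finite extension of $k$, then $K$ is a number field and $\Gal(\Qbar/K)\subseteq\Gal(\Qbar/k)=\Stab_G(S)$. By Lemma~\ref{lemma:S-over-K-characterization}, it suffices to show that $S\cap Y(K,v)\neq\varnothing$ forces $Y(K,v)\subseteq S$ for every $v\in M_K$. Given $y\in S\cap Y(K,v)$ and an arbitrary $y'\in Y(K,v)$, the classical transitivity of $\Gal(\Qbar/K)$ on the places of $\Qbar$ lying over $v$ produces some $\sigma\in\Gal(\Qbar/K)$ with $\sigma y=y'$; since $\sigma\in\Stab_G(S)$, we conclude $y'=\sigma y\in S$. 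The principal technical hurdle is the opening step of establishing openness of $\Stab_G(S)$: without it the infinite Galois correspondence would not directly identify $\Gal(\Qbar/k)$ with $\Stab_G(S)$, and the rest of the argument would collapse. Once that is in place, both directions reduce to standard properties of the Galois action on places.
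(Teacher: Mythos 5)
Your proof is correct and rests on the same two pillars as the paper's: the observation that $\Gal(\Qbar/L)\subseteq\Stab_G(S)$ whenever $S$ is defined over $L$, and transitivity of the Galois action on places above a fixed place of a number field. Where it diverges is in how it establishes that $S$ is actually defined over $k$ (and over any finite extension of $k$). You invoke Lemma~\ref{lemma:S-over-K-characterization} head-on and use transitivity of the full profinite group $\Gal(\Qbar/K)$ on $Y(K,v)$ to move any $y'\in Y(K,v)$ onto a point of $S\cap Y(K,v)$, whereas the paper argues by contradiction: it picks a finite Galois extension $L/k$ fine enough to separate $S\cap Y(k,v)$ from $Y(k,v)\setminus S$, uses transitivity of the \emph{finite} group $\Gal(L/k)$ on places of $L$ over $v$ to produce a $\sigma$ with $\sigma w'=w$, and then lifts $\sigma$ to $G$ to contradict $\sigma\in\Stab_G(S)$. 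Your version is cleaner in one further respect: you state explicitly that $\Stab_G(S)$ is open (hence closed), which is exactly what licenses the identification $\Gal(\Qbar/k)=\Stab_G(S)$ via the infinite Galois correspondence. The paper uses that identification (in the line ``$\sigma\in\Stab_G(S)$ by definition of $k$ as the fixed field of $\Stab_G(S)$'') but leaves the closedness implicit; making it explicit, as you do, removes a small gap in the exposition. The trade-off is that you rely on transitivity of $\Gal(\Qbar/K)$ at the profinite level, which requires a compactness/inverse-limit argument beyond the finite-degree transitivity the paper uses, but that is a standard fact.
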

Thus $k$ is the unique minimal field over which $S$ is defined, and such a field may be naturally associated to any compact open set $S$ as the \emph{field of definition} of $S$.
\begin{proof}
 First, let us show that any field over which $S$ is defined is an extension of $k$ (and in particular, that $k$ is indeed a number field, so the subgroup $\Stab_G(S)$ must have finite index in $G$). By Lemma \ref{lemma:S-iff-compact-open}, $S$ is defined over some number field $K$. Clearly $\Gal(\Qbar/K)\subseteq \Stab_G(S)$, since $S$ is a union of sets of the form $Y(K,v)$ but $\Gal(\Qbar/K)$ must fix such sets if it is to act trivially on $K$.  (That is, if $y\in Y(K,v)$ but $\sigma y$ were not, then for some $\alpha\in K$ we would have $\|\alpha\|_y\neq\|\alpha\|_{\sigma y}$ as $y$ and $\sigma y$ must lie over distinct places of $K$, and hence $\|\sigma^{-1}\alpha\|_v=\|\sigma^{-1}\alpha\|_y=\|\alpha\|_{\sigma y}\neq\|\alpha\|_v$, which implies that $\alpha\neq\sigma\alpha$, and so $\sigma\notin\Gal(\Qbar/K)$.) But then $k\subseteq K$ by the Galois correspondence, and in particular $k$ is a number field.

 Now let $k=\Fix(\Stab_G(S))$. We will show that $S$ is in fact defined over $k$. Assume it is not; then by Lemma \ref{lemma:S-over-K-characterization}, there must exist a place $v$ of $k$ such that $S\cap Y(k,v)\neq\varnothing$, but $Y(k,v)\not\subseteq S$. Since the sets of the form $Y(L,w)$ for $L$ a finite Galois extension form a basis for the topology (see \cite{AV}), there exists some such $L$ which contains $k$ such that $Y(L,w)\subsetneqq S\cap Y(k,v)$. Then the place $v$ of $k$ splits into places of $L$, including $w$. Let $w'$ be a place of $L$ such that $Y(L,w')\subset Y(k,v)\setminus S$. By the same reasoning such an $L$ exists, after replacing $L$ with a larger Galois extension if necessary. Then there exists $\sigma\in \Gal(L/k)$ such that $\sigma w' = w$. Lifting $\sigma$ to $G$, we see that $\sigma(Y(L,w'))=Y(L,w)$ and $\sigma\in \Gal(\Qbar/k)\subset G$. But then $\sigma\in \Stab_G(S)$ by definition of $k$ as the fixed field of $\Stab_G(S)$, but for any $y\in Y(L,w)$, $\sigma(y)\not\in S$ because $\sigma(y)\in Y(L,w')\subset Y(K,v)\setminus S$ by construction. But this is a contradiction to $\sigma$ being in the stabilizer of the set $S$.
\end{proof}

\subsection{Proof of Theorem \ref{thm:1}}
The proof will proceed in a series of lemmas. For an open set $E\subset Y$, we let $LC(E)$ denote the set of all locally constant functions on $E$, that is, functions $f$ such that $\supp(f)\subseteq E$ and at each $y\in E$, there exists an open neighborhood $N$ of $y$ such that $f$ is constant on $N$. We let $LC_c(E)$ denote the locally constant functions on $E$ with compact support. We assume $1\leq p\leq \infty$ and we denote by $\|\cdot\|_p$ the usual $L^p$ norm throughout.

\begin{lemma}\label{lemma:3}
 Let $f\in LC_c(Y)$. Then there exists a number field $K$ such that
\[
 f(y) = \sum_{v\in M_K} x_v\cdot\chi_{Y(K,v)}(y)
\]
 where $x_v\in\bR$ and is zero for almost all $v$, and $\chi_E$ denotes the characteristic function of the set $E$.
\end{lemma}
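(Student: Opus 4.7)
The plan is to mimic the cover-and-compositum argument already used in the proof of Lemma \ref{lemma:S-iff-compact-open}. The only extra ingredient is that the refining number field has to be chosen so that $f$ is actually constant on each basic piece $Y(K,v)$, not merely that the pieces cover $\supp(f)$.

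First, I would observe that a locally constant function on $Y$ has clopen level sets; in particular $\supp(f)=\{y\in Y:f(y)\neq 0\}$ is open and, by hypothesis, compact. For each $y\in\supp(f)$, local constancy of $f$ at $y$ together with the fact that the sets $Y(L,w)$ form a basis for the topology of $Y$ yields a number field $L^{(y)}$ and a place $v^{(y)}\in M_{L^{(y)}}$ with $y\in Y(L^{(y)},v^{(y)})$ and such that $f$ is constant on $Y(L^{(y)},v^{(y)})$. By compactness of $\supp(f)$, finitely many of these basic sets, say $Y(L_1,v_1),\ldots,Y(L_N,v_N)$, already cover $\supp(f)$.

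Next, I would take $K=L_1\cdots L_N$ to be the compositum. Each $Y(L_i,v_i)$ then decomposes as a finite disjoint union $\bigsqcup_{w\mid v_i} Y(K,w)$ of basic open sets defined over $K$. Let $T\subset M_K$ be the (finite) collection of all places $w$ of $K$ that arise in this way. Since any $Y(K,w)$ with $w\in T$ sits inside some $Y(L_i,v_i)$ on which $f$ is constant, $f$ is constant on each $Y(K,w)$, with some value $x_w\in\bR$. Because distinct places of $K$ give disjoint $Y(K,w)$'s, and because $\supp(f)\subseteq\bigcup_{w\in T}Y(K,w)$, defining $x_v=0$ for $v\in M_K\setminus T$ gives
\[
 f(y)=\sum_{v\in M_K} x_v\cdot\chi_{Y(K,v)}(y)
\]
with $x_v=0$ for all but finitely many $v$, as required.

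There is no real obstacle here beyond being careful that passing to the common refinement $K$ preserves both the cover and the ``$f$ is constant'' property; the step to watch is the observation that the $Y(K,w)$ for $w\in T$ do not extend beyond $\supp(f)$ into regions where $f$ vanishes in a way that would mismatch the sum, which is handled automatically because each such $Y(K,w)$ is contained in some $Y(L_i,v_i)$ on which $f$ is already pinned down as a single real constant.
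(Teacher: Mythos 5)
Your proof is correct. The paper does not actually prove this lemma but simply cites it as Lemma~4 of Allcock--Vaaler; your self-contained cover-and-compositum argument, which parallels the paper's own proof of Lemma~\ref{lemma:S-iff-compact-open} and carefully keeps track of the ``$f$ is constant on each basic piece'' requirement, is the natural and correct way to establish it directly.
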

\begin{proof}
This is \cite[Lemma 4]{AV}.
\end{proof}
\begin{lemma}\label{lemma:2}
 For any given $\ep>0$ and $f\in C(Y(K,v))$ there exists a $g\in LC(Y(K,v))$ such that $\|f-g\|_p<\ep$, where $K$ is a number field and $v\in M_K$ and $1\leq p\leq \infty$.
\end{lemma}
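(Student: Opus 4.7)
The plan is to apply the Stone–Weierstrass theorem on the compact Hausdorff space $Y(K,v)$, and then convert the resulting uniform approximation to an $L^p$ approximation using the finite measure of $Y(K,v)$.

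First I would verify that $Y(K,v)$ is compact and totally disconnected. Writing $Y=\ilim_{L\in\cK} M_L$, we have
\[
 Y(K,v) = \ilim_{L\supseteq K}\,\{w\in M_L : w\mid v\},
\]
an inverse limit of finite discrete spaces, which is compact, Hausdorff, and totally disconnected. Moreover, by the Allcock–Vaaler measure identity, $\lambda(Y(K,v)) = [K_v:\bQ_v]/[K:\bQ] < \infty$, a fact we shall use at the end.

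Next I would introduce the algebra $\cA = LC(Y(K,v))$ viewed as a subset of $C(Y(K,v))$ (equivalently, continuous functions on $Y$ supported in the clopen set $Y(K,v)$). I would check the Stone–Weierstrass hypotheses: (i) $\cA$ is a subalgebra, since pointwise sums, products, and scalar multiples of locally constant functions are locally constant; (ii) $\cA$ contains the constants, because $\chi_{Y(K,v)}\in\cA$ serves as a unit element for functions on $Y(K,v)$; (iii) $\cA$ separates points, which is the only nontrivial check. For distinct $y_1,y_2\in Y(K,v)$, the basis sets of the form $Y(L,w)$ with $L\supseteq K$ and $w\mid v$ form a neighborhood basis at $y_1$, so by the Hausdorff property we may choose such a basic clopen set containing $y_1$ but not $y_2$; its characteristic function lies in $\cA$ and distinguishes the two points. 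Real Stone–Weierstrass then gives that $\cA$ is dense in $C(Y(K,v))$ in the uniform norm, proving the lemma in the case $p=\infty$.

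Finally, for $1\leq p<\infty$, the finite-measure estimate
\[
 \|f-g\|_p \;\leq\; \lambda(Y(K,v))^{1/p}\,\|f-g\|_\infty
\]
lets us transfer the uniform approximation to an $L^p$ approximation: given $\ep>0$, choose $g\in LC(Y(K,v))$ with $\|f-g\|_\infty < \ep\,\lambda(Y(K,v))^{-1/p}$. Note that any such $g$ obtained from Stone–Weierstrass automatically has finite image and is a finite $\bR$-linear combination of characteristic functions of clopen sets contained in $Y(K,v)$, consistent with the structural description in Lemma \ref{lemma:3}.

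The main conceptual point, and the only genuinely nonformal step, is the verification that $\cA$ separates points; this reduces to the existence of a basis of clopen sets for $Y$, which is built into the construction of $Y$ as a projective limit of discrete spaces. Everything else is a standard Stone–Weierstrass plus Hölder argument.
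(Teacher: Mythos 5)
Your argument is correct and matches the approach the paper attributes to this lemma: the paper itself gives no proof, simply citing Allcock--Vaaler, but notes in the introduction that the result ``is proven using the Stone--Weierstrass theorem,'' which is exactly the route you take (Stone--Weierstrass on the compact totally disconnected space $Y(K,v)$ for uniform density of $LC(Y(K,v))$, followed by the finite-measure estimate $\|f-g\|_p\leq\lambda(Y(K,v))^{1/p}\|f-g\|_\infty$ to handle $1\leq p<\infty$). Your verification of the Stone--Weierstrass hypotheses, in particular point separation via the clopen basis $\{Y(L,w)\}$, is the right nontrivial check and is carried out correctly.
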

\begin{proof}
 This result is proven in \cite{AV} in the course of proving Theorems 1-3.
\end{proof}
\begin{lemma}\label{lemma:LCS-dense-in-CS}
 Let $S\subset Y$ be a compact open set. For any given $\ep>0$ and $f\in C(S)$ there exists a $g\in LC(S)$ such that $\|f-g\|_p<\ep$.
\end{lemma}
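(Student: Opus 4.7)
The plan is to reduce the statement for a general compact open set $S$ to the basic case $S = Y(K,v)$ already handled by Lemma \ref{lemma:2}, using the fact that any compact open set decomposes into a finite disjoint union of such basic sets.

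First, I would invoke Lemma \ref{lemma:S-iff-compact-open} to write
\[
 S = \bigcup_{v\in T} Y(K,v)
\]
for some number field $K$ and finite set $T \subseteq M_K$. The key observation is that distinct basic sets $Y(K,v)$ and $Y(K,w)$ with $v \neq w$ are disjoint, because every place of $\Qbar$ lies over a unique place of $K$; moreover each $Y(K,v)$ is clopen in $Y$, hence also clopen (and compact) in $S$.

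Next, for each $v \in T$ the restriction $f|_{Y(K,v)}$ belongs to $C(Y(K,v))$ since $Y(K,v)$ is an open subset of $S$. Applying Lemma \ref{lemma:2} to each restriction, I obtain $g_v \in LC(Y(K,v))$ with
\[
 \|f|_{Y(K,v)} - g_v\|_p < \ep \cdot |T|^{-1/p}
\]
when $1 \leq p < \infty$, and with $\|f|_{Y(K,v)} - g_v\|_\infty < \ep$ when $p = \infty$. Extending each $g_v$ by zero outside $Y(K,v)$, I set $g = \sum_{v \in T} g_v$. Because the supports are pairwise disjoint clopen sets partitioning $S$, the function $g$ is locally constant at every point of $S$ (in a neighborhood of $y \in Y(K,v)$ it agrees with $g_v$), so $g \in LC(S)$.

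Finally, using disjointness of the supports, for $1 \leq p < \infty$,
\[
 \|f - g\|_p^p = \sum_{v \in T} \|f|_{Y(K,v)} - g_v\|_p^p < \sum_{v \in T} \frac{\ep^p}{|T|} = \ep^p,
\]
and for $p = \infty$,
\[
 \|f - g\|_\infty = \max_{v \in T} \|f|_{Y(K,v)} - g_v\|_\infty < \ep.
\]
There is no real obstacle here beyond bookkeeping; the substantive analytic content — approximating a continuous function on a basic clopen set by a locally constant one — has been absorbed into Lemma \ref{lemma:2} (which in turn rests on Stone--Weierstrass, per the authors' remarks).
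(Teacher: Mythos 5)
Your proof is correct and follows essentially the same route as the paper's: decompose $S$ into the finite disjoint union $\bigcup_{v\in T} Y(K,v)$ via Lemma~\ref{lemma:S-iff-compact-open} and apply Lemma~\ref{lemma:2} on each piece. The paper states this compactly as the $\ell^p$-direct-sum identification $C(S)={\bigoplus_{v\in T}}^{(p)} C(Y(K,v))$; your version just unwinds that notation into explicit $\ep$-bookkeeping.
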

\begin{proof}
 By Lemma \ref{lemma:S-iff-compact-open} there exists a field $K$ and a finite set of places $T\subset M_K$ such that $S=\bigcup_{v\in T} Y(K,v)$. Observe that when each space is endowed with the $L^p$ norm we have:
\[
 C(S) = {\bigoplus_{v\in T}}^{(p)} C(Y(K,v))
\]
 where the direct sum notation implies that each $C(Y(K,v))$ space is endowed with the $L^p$ norm and the finite number of vector spaces are combined with the $\ell^p(T)$ norm. The result then follows immediately from Lemma \ref{lemma:2} above.
\end{proof}

$S$ will denote throughout the remainder of this section a compact open set of $Y$. Let
\[
 LC(S,0) = \{f\in LC(S) : \int_S f =0\},\quad C(S,0) = \{f\in C(S) : \int_S f =0\},
\]
and
\[
 \cF_p(S,0)=\{f\in\cF_p : \supp(f)\subseteq S\text{ and }\int_S f = 0\}
\]
(We note in passing that $\cF_1(S,0)=\cF_1(S)=\{f\in\cF_1 : \supp(f)\subseteq S\}$, but that this is not the case for $p>1$.) 

\begin{lemma}
 For any given $\ep>0$ and $f\in LC(S,0)$ there exists $g\in \cF(S)$ such that $\|f-g\|_p<\ep$, where $\cF(S)$ denotes the $S$-unit space defined in \eqref{eqn:VS-space}. 
\end{lemma}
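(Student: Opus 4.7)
The plan is to reduce the problem to the classical Dirichlet $S$-unit theorem via Lemma \ref{lemma:3}. By that lemma I may write $f=\sum_{v\in M_K} x_v\chi_{Y(K,v)}$ for some number field $K$, with only finitely many nonzero $x_v\in\bR$. After enlarging $K$ via Lemma \ref{lemma:S-iff-compact-open}, I may further assume that $S=\bigcup_{v\in T_0}Y(K,v)$ for a finite set $T_0\subset M_K$. Since $\supp f\subseteq S$ and the sets $Y(K,v)$ are pairwise disjoint across distinct places of $K$, we must have $x_v=0$ whenever $v\notin T_0$. The hypothesis $\int_S f=0$ then translates to $\sum_{v\in T_0}\tfrac{[K_v:\bQ_v]}{[K:\bQ]}x_v=0$. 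Because $S$ contains every archimedean place of $\bQ$, and any archimedean place $v$ of $K$ satisfies $Y(K,v)\subseteq Y(\bQ,\infty)\subseteq S$, the set $T_0$ automatically contains all archimedean places of $K$, so the classical $S$-unit theorem applies to $U_{K,T_0}$.

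Next I would invoke Dirichlet: the image of $U_{K,T_0}/\Tor$ under $\xi\mapsto(\log\|\xi\|_v)_{v\in T_0}$ is a lattice of rank $|T_0|-1$ whose $\bR$-span is exactly the hyperplane $H=\bigl\{(y_v)\in\bR^{T_0}:\sum_v \tfrac{[K_v:\bQ_v]}{[K:\bQ]}y_v=0\bigr\}$. Our target $(x_v)_{v\in T_0}$ lies in $H$, so the $\bQ$-span of this lattice (which is dense in $H$) contains vectors arbitrarily close to $(x_v)_{v\in T_0}$. Clearing denominators, for any $\delta>0$ I can produce $\alpha\in U_{K,T_0}$ and $n\in\bN$ with
\[
\left|\tfrac{1}{n}\log\|\alpha\|_v-x_v\right|<\delta\qquad\text{for every }v\in T_0.
\]

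Finally, let $\beta\in\Qbar^\times$ be any $n$th root of $\alpha$. Since the absolute values $\|\cdot\|_w$ for $w\mid v$ all extend the single absolute value $\|\cdot\|_v$ on $K$, one computes $\|\beta\|_y=\|\alpha\|_v^{1/n}$ for every $y\in Y(K,v)$, giving
\[
f_\beta=\sum_{v\in T_0}\tfrac{1}{n}\log\|\alpha\|_v\cdot\chi_{Y(K,v)}.
\]
Because $\alpha\in U_{K,T_0}$ and $T_0=\{v\in M_K:Y(K,v)\subseteq S\}$, the function $f_\beta$ is supported in $S$, so $\beta$ is an algebraic $S$-unit and $f_\beta\in\cF(S)$. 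A direct estimate then yields $\|f-f_\beta\|_p\le\delta\,\lambda(S)^{1/p}$ for $1\le p<\infty$ and $\|f-f_\beta\|_\infty<\delta$; taking $\delta$ small enough produces the desired $g=f_\beta$.

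The only delicate step is the simultaneous rational approximation extracted from Dirichlet's lattice: one must approximate all $|T_0|$ coordinates $x_v$ by rational multiples of the log-values of a \emph{single} $\alpha\in U_{K,T_0}$. This is routine after fixing a lattice basis and a common denominator, so I expect no serious obstacle; Lemma \ref{lemma:3} and the classical $S$-unit theorem have already done the essential work.
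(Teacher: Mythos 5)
Your proof is correct and follows essentially the same route as the paper: reduce via Lemma \ref{lemma:3} to a finite-dimensional hyperplane indexed by the places of a single number field $K$ over which $S$ is defined, then invoke the classical Dirichlet $S$-unit theorem (applicable because $S\supseteq Y(\bQ,\infty)$ forces $T_0$ to contain all archimedean places of $K$) to get density. The paper phrases the finite-dimensional step abstractly via a pair of embeddings $\varphi,\theta$ and the general fact that a full-rank $\bQ$-subspace of a finite-dimensional real space is dense, whereas you make the approximant explicit by clearing denominators to obtain a single $\alpha\in U_{K,T_0}$ and passing to an $n$th root $\beta$ to land in $\cF(S)$ — but that construction is exactly what the map $\varphi^{-1}$ in the paper amounts to, and the norm compatibility you check by hand is what the paper asserts when it says the $L^p$ norm on $\cF_p(S,0)$ "induces a norm on $\bR(T,0)$ which agrees with its restriction to $V_L$." No gaps.
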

\begin{proof}
 Suppose we have $f\in LC(S,0)$. By Lemma \ref{lemma:3}, $f$  has the form
\begin{equation}\label{eqn:1}
 f(y) = \sum_{v\in T} x_v \cdot \chi_{Y(L,v)}(y)
\end{equation}
 where $T=\{v\in M_L : Y(L,v)\subseteq S\}$, $L$ is some number field over which $S$ is defined, and $x_v\in\bR$. Notice that the $\bQ$-vector space span of the $S$-units of $L$ in $\cF$ is precisely $\cF(S)\cap V_L$. Let
\[
 \bR(T,0) = \{ x\in\bR^{T} : \sum_{v\in T} \frac{[L_v:\bQ_v]}{[L:\bQ]} x_v = 0\}.
\]
 Observe that we have natural embeddings
 \begin{align*}
  \varphi: V_L\cap \cF(S) &\hookrightarrow \bR(T,0)\\
    \fal  &\mapsto (\log\|\al\|_v)_{v\in T}
 \end{align*}
 and 
 \begin{align*}
  \theta: \bR(T,0) &\hookrightarrow \cF_p(S,0)\\
    (x_v)_{v\in T}  &\mapsto \sum_{v\in T} x_v \cdot \chi_{Y(L,v)}(y)
 \end{align*}
 and that $\theta\circ\varphi:V_L\cap \cF(S)\hookrightarrow \cF_p(S,0)$ is the inclusion map. (Notice that the embedding $\varphi$ is a rescaled version of the logarithmic embedding \eqref{eqn:logarithmic-embedding} used in the statement of Dirichlet's theorem.) Observe that if $x_v$ is as in \eqref{eqn:1} above, the element $x=(x_v)\in \bR(T,0)$ has $\theta(x)=f$. By Dirichlet's $S$-unit theorem for the $S$-units of $L$, we see that the image $\varphi(\cF(S)\cap V_L)$ is a $\bQ$-vector space of full rank in $\bR(T,0)$ and is thus dense (in any norm, as all vector space norms are equivalent on finite dimensional spaces). Observe that the $L^p$ norm on $\cF_p(S,0)$ induces a norm on $\bR(T,0)$ which agrees with its restriction to $V_L$. Thus, since $\bR(T,0)$ is finite dimensional, we see that for any $\ep>0$ we can approximate $x\in \bR(T,0)$ by an element of $g\in \cF(S)\cap V_L$ with
\[
 \|g-f\|_p= \|g-\theta(x)\|_p= \|\varphi(g)-x\| <\ep,
\]
 and the proof is complete.
\end{proof}

Define the map
\begin{equation}\label{eqn:PS-defn}
 P_S : \cF_p \ra \cF_p(S,0)
\end{equation}
via
\begin{equation}
 (P_Sf)(y)=\chi_S(y)\bigg(f(y) - \frac{1}{\lambda(S)}\int_S f(z)\,d\lambda(z)\bigg).
\end{equation}
\begin{lemma}
 $P_S$ is a continuous projection in $L^p$ norm onto its range $\cF_p(S,0)$ for $1\leq p\leq\infty$.
\end{lemma}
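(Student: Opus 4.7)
The plan is to verify the three defining properties in turn: (i) $P_S$ genuinely maps into $\cF_p(S,0)$, (ii) it restricts to the identity on $\cF_p(S,0)$ (so $P_S^2=P_S$), and (iii) it is a bounded linear operator in the $L^p$ norm.

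For (i), I would note that $P_S f$ is a sum of two terms each containing the factor $\chi_S$, so $\supp(P_S f)\subseteq S$ is immediate. Since $S$ is compact and open in the locally compact Hausdorff space $Y$, the set $S$ is clopen, so $\chi_S$ is continuous, which is what is needed to keep the $p=\infty$ case inside $C_0(Y)$ (the constant correction has compact support). The integral condition follows from a direct calculation:
\[
\int_S P_S f\,d\lambda = \int_S f\,d\lambda - \frac{1}{\lambda(S)}\int_S f(z)\,d\lambda(z)\cdot\lambda(S) = 0.
\]
For $p=1$ this also guarantees $\int_Y P_S f = 0$, the extra condition in the description of $\cF_1$.

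For (ii), if $f\in \cF_p(S,0)$, then $\chi_S f = f$ (since $\supp f\subseteq S$) and the correction term vanishes (since $\int_S f = 0$), so $P_S f = f$. In particular $P_S$ is idempotent and its range is exactly $\cF_p(S,0)$.

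For (iii), linearity is obvious from the formula. For continuity I would apply the triangle inequality to
\[
P_S f = \chi_S f - \frac{1}{\lambda(S)}\Big(\int_S f\,d\lambda\Big)\chi_S,
\]
giving $\|P_S f\|_p \leq \|f\|_p + \frac{1}{\lambda(S)}\big|\int_S f\,d\lambda\big|\cdot\|\chi_S\|_p$. Then I would control $|\int_S f\,d\lambda|$ by Hölder's inequality with conjugate exponent $q$: $|\int_S f\,d\lambda|\leq \|\chi_S\|_q\|f\|_p = \lambda(S)^{1/q}\|f\|_p$, while $\|\chi_S\|_p = \lambda(S)^{1/p}$. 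Multiplying and using $1/p+1/q=1$ (with the usual convention for the endpoints $p=1$ and $p=\infty$) gives $\|P_S f\|_p \leq 2\|f\|_p$, establishing continuity with operator norm at most $2$.

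There is no real obstacle here — the only point that requires a moment of care is the $p=\infty$ case, where one must check that $P_S f$ still lies in $C_0(Y)$, which is handled by the observation that $S$ is clopen (so $\chi_S$ is continuous) and compact (so $\chi_S$ has compact support and hence vanishes at infinity); the remaining estimates go through with the convention $\lambda(S)^{0}=1$.
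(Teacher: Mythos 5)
Your proof is correct and follows essentially the same line as the paper: split $P_Sf$ into $\chi_S f$ minus the constant correction, bound the first term by $\|f\|_p$, and control the second via H\"older's inequality. Two small points are worth noting. First, your bookkeeping is actually a bit more careful than the paper's: the $L^p$ norm of the correction term $\tfrac{1}{\lambda(S)}\bigl(\int_S f\bigr)\chi_S$ is $\lambda(S)^{1/p-1}\bigl|\int_S f\bigr|$, and after H\"older this factor cancels exactly against $\lambda(S)^{1-1/p}$, giving the clean bound $\|P_S\|_p\le 2$; the paper's displayed first inequality appears to drop the factor $\lambda(S)^{1/p-1}$ (which exceeds $1$ when $\lambda(S)<1$ and $p>1$), so its stated bound $1+\lambda(S)^{1-1/p}$ is not justified by the computation as written, though the conclusion of continuity is of course unaffected. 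Second, you make explicit the verification that $P_Sf$ actually lands in $\cF_p(S,0)$ — in particular the $p=\infty$ case, where the fact that $S$ is clopen and compact is what keeps $P_Sf$ in $C_0(Y)$ — whereas the paper leaves this implicit; this is a genuine (if minor) gap-filler.
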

\begin{proof}
First, observe that $P_S$ is well-defined, as the integral of an $L^p$ function on a compact set is well-defined, and note that
\begin{align*}
 \|P_S f\|_p &\leq \|\chi_S(y)\cdot f(y)\|_p + \left|\int_S f(z)\,d\lambda(z)\right| \\
 &\leq \|f\|_p + \lambda(S)^{1-1/p} \|f\cdot \chi_S \|_p\leq (1+\lambda(S)^{1-1/p})\|f\|_p
\end{align*}
and thus $P_S$ is continuous with $L^p$ operator norm\footnote{Recall that the \emph{(operator) norm $\|T\|$} of a linear transformation $T$ on a normed vector space $V$ is defined as the infimum of the values $C>0$ such that $\|Tx\|\leq C\|x\|$ for all $0\neq x\in V$ (or $+\infty$ if no such value exists). $T$ is continuous if and only if it has a finite operator norm.}
\[
 \|P_S\|_p\leq 1+\lambda(S)^{1-1/p}.
\]
It is easy to see that ${P_S}^2=P_S$, and thus $P_S$ is a projection onto the subspace $\cF_p(S,0)$, on which it obviously acts as the identity.
\end{proof}

\begin{lemma}
 For any given $\ep>0$ and $f\in C(S,0)$ there exists a $g\in LC(S,0)$ such that $\|f-g\|_p<\ep$. 
\end{lemma}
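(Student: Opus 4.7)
The plan is to combine the density of $LC(S)$ in $C(S)$ (Lemma \ref{lemma:LCS-dense-in-CS}) with the continuous projection $P_S$ onto the mean-zero subspace. Given $f \in C(S,0)$, first I would use Lemma \ref{lemma:LCS-dense-in-CS} to pick some $h \in LC(S)$ with $\|f-h\|_p < \epsilon/\|P_S\|_p$. This $h$ need not have mean zero, but we can correct for that with the projection operator.

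Next I would define $g = P_S h$. The key verification is that $g \in LC(S,0)$: by construction $g$ has integral zero on $S$ and is supported on $S$, and $g(y) = \chi_S(y)(h(y) - c)$ where $c = \lambda(S)^{-1}\int_S h\, d\lambda$ is a fixed scalar. Since $S$ is compact open, the characteristic function $\chi_S$ is itself locally constant on $Y$, and $h$ is locally constant on $S$ by hypothesis, so $h - c$ is locally constant on $S$ and the product $\chi_S(h-c)$ is locally constant on all of $Y$ with support in $S$. Hence $g \in LC(S,0)$.

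Finally, since $f \in C(S,0)$ already satisfies $\supp(f) \subseteq S$ and $\int_S f\, d\lambda = 0$, we have $P_S f = f$. Thus
\[
\|f - g\|_p = \|P_S f - P_S h\|_p = \|P_S(f-h)\|_p \leq \|P_S\|_p \cdot \|f-h\|_p < \epsilon,
\]
completing the proof. The only real obstacle is the observation that $P_S$ preserves local constancy when $S$ is compact open, which is what allows us to turn an arbitrary $LC(S)$ approximation into a mean-zero $LC(S,0)$ approximation without leaving the class of locally constant functions.
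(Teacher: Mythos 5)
Your proof is essentially identical to the paper's: both approximate $f$ by some $h \in LC(S)$ via Lemma \ref{lemma:LCS-dense-in-CS}, then take $g = P_S h$ and use $P_S f = f$ together with the boundedness of $P_S$ to close the estimate. The only difference is cosmetic (you rescale $\ep$ by $\|P_S\|_p$ at the outset to land exactly under $\ep$, while the paper accepts a bound of $\|P_S\|_p \cdot \ep$), and your verification that $P_S h \in LC(S,0)$ matches the paper's observation that $P_S h$ differs from $h$ by a constant on $S$.
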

\begin{proof}
 Let $f\in C(S,0)\subset C(S)$. By Lemma \ref{lemma:LCS-dense-in-CS} we can choose $h\in LC(S)$ such that $\|f-h\|_p<\ep$. Observe that $P_S f= f$, so
\[
 \|f-P_Sh\|_p = \|P_S(f-h)\|_p \leq \|P_S\|_p\cdot \|f-h\|_p < \|P_S\|_p\cdot\ep.
\]
 Observe that $P_Sh$ differs from $h$ by a constant function on $S$ (namely, $-\frac{1}{\lambda(S)}\int_S h$) and therefore is also an element of $LC(S)$, and in fact, lies in $LC(S,0)$. Thus we may take $g=P_Sh$ and the proof is complete.
\end{proof}
\begin{lemma}
 For any given $\ep>0$ and $f\in \cF_p(S,0)$ there exists a $g\in C(S,0)$ such that $\|f-g\|_p<\ep$.
\end{lemma}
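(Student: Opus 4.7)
The plan is to approximate $f\in\cF_p(S,0)$ first by an arbitrary continuous function on $S$, and then use the projection $P_S$ defined in \eqref{eqn:PS-defn} to correct the approximation so that it lies in $C(S,0)$. This mirrors the previous lemma's strategy of passing from $LC(S)$ to $LC(S,0)$ via $P_S$.

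First, the case $p=\infty$ is essentially trivial: by \eqref{eqn:completionsFp} we have $\cF_\infty=C_0(Y)$, so any $f\in\cF_\infty(S,0)$ is continuous on $Y$, hence on the compact set $S$, and satisfies $\int_S f=0$. Thus $f|_S\in C(S,0)$ and we may take $g=f$. For $1\leq p<\infty$, we have $\cF_p(S,0)\subseteq L^p(S,\lambda)$, and the goal reduces to showing $C(S,0)$ is dense in this subspace.

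The key input is the standard density statement that $C(S)$ is dense in $L^p(S,\lambda)$ for $1\leq p<\infty$. This follows from the fact that $\lambda$ is a Borel (in fact Radon) measure on the compact Hausdorff space $S$ (the Allcock--Vaaler construction makes this explicit): approximate $f$ by a simple function, then apply Lusin's theorem, or equivalently invoke the general result that $C_c$ is dense in $L^p$ on a locally compact Hausdorff space with a Radon measure (and $C_c(S)=C(S)$ since $S$ is compact). Choose $h\in C(S)$ with $\|f-h\|_p<\ep/\|P_S\|_p$, where $\|P_S\|_p<\infty$ by the continuity lemma above.

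Finally, set $g=P_S h$. Since $S$ is clopen in $Y$ and $h$ is continuous on $S$, the function $P_S h$ is continuous on $Y$, is supported in $S$, and satisfies $\int_S P_S h=0$ by construction, so $g\in C(S,0)$. Because $f\in\cF_p(S,0)$ we have $P_S f=f$, and therefore
\[
\|f-g\|_p=\|P_S(f-h)\|_p\leq \|P_S\|_p\cdot\|f-h\|_p<\ep.
\]
The main obstacle, such as it is, is the density of $C(S)$ in $L^p(S,\lambda)$; this is a standard measure-theoretic fact given that $\lambda$ is Radon, and no number-theoretic structure is used in this final step beyond the properties of $P_S$ and $\lambda$ already established.
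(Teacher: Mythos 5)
Your proof is correct and takes essentially the same approach as the paper: approximate $f$ by a continuous function $h$ (you use density of $C(S)$ in $L^p(S,\lambda)$ plus a separate $p=\infty$ case, while the paper invokes density of $C_c(Y)$ in $\cF_p$ for all $p$ at once), then apply the bounded projection $P_S$ to land in $C(S,0)$ and use $P_Sf=f$ together with $\|P_S\|_p<\infty$ to close the estimate.
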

\begin{proof}
 Let $f\in\cF_p(S,0)$. Since $C_c(Y)$ is dense in $\cF_p$, we can choose $h\in C_c(Y)$ such that $\|f-h\|_p<\ep$. Then observe that $P_Sf=f$, so
\[
 \|f-P_Sh\|_p = \|P_S(f-h)\|_p \leq \|P_S\|_p\cdot \|f-h\|_p < \|P_S\|_p\cdot\ep,
\]
 and $P_S h\in C(S,0)$ as well since it is a continuous function restricted to a compact open set minus a function constant on that compact open set. Thus we may take $g=P_S h$ and the proof is complete.
\end{proof}

\noindent Combining the above lemmas, we see that given any $f\in \cF_p(S,0)$ there exists a $g\in \cF(S)$ such that $\|f-g\|_p<3\ep$ by the triangle inequality. Since $\cF_p(S,0)$ is closed (it is defined as the null space of the linear functional $\int_S$ on $\cF_p$) the proof of Theorem \ref{thm:1} is complete.

\subsection{Proof of Theorem \ref{thm:Rdep-Qdep}}
Suppose the equivalence class of $f^{(i)}$ has a representative $\al_i\in\Qbar^\times$. Let $K=\bQ(\al_1,\ldots,\al_n)$. Then $f^{(i)}\in V_K$ for each $i$. Let $T=\{v\in M_K : Y(K,v)\subseteq S\}$. Each $f^{(i)}$ corresponds exactly to a vector $x_i=(x_{i,v})_{v\in T}\in\bR^T$ under the logarithmic embedding defined in \eqref{eqn:logarithmic-embedding} above, specifically, we let
\[
 x_{i,v} = \frac{[K_v:\bQ_v]}{[K:\bQ]} f^{(i)}(y)\quad\text{where}\quad y\in Y(K,v).
\]
(Recall that since $f^{(i)}\in V_K$, it is constant on the sets $Y(K,v)$, so this is well-defined.) Notice that
\[
 \bigoplus_{i=1}^n \bZ\cdot x_i\subseteq l(U_{K,T})
\]
where $U_{K,T}$ denotes the usual $S$-unit subgroup of the field $K$. By the classical Dirichlet $S$-unit theorem, any $\bR$-linear dependence amongst the vectors $x_i$ implies a $\bQ$-linear dependence by the discreteness of the lattice $l(U_{K,T})$ (as discussed in part \eqref{enum:second-conclusion} of Dirichlet's $S$-unit Theorem above). But then any linear dependence amongst the $x_i$ vectors is obviously equivalent to the same dependence amongst the $f^{(i)}$ functions by construction, and the proof is complete. \hfill $\qed$

\subsection{Proof of Theorem \ref{thm:2}}
We begin with the following lemma:
\begin{lemma}\label{lemma:thm2-main-lemma}
Suppose $S$ is a compact open subset of $Y$ which is defined over a number field $K$ which has at least one real archimedean place. If $S\cap Y(\bQ,\infty)=\varnothing$, then the subspace
\[
 V_{K,S} = \{f\in V_K : \supp(f)\subseteq S\} \subset \cF(S)
\]
is trivial, that is, $V_{K,S}=\{0\}$. 
\end{lemma}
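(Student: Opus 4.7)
The plan is to reduce the problem to a single element of $K^\times$, transfer the support condition to the archimedean places, and then exploit the injectivity of the hypothesized real embedding.

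First I would take an arbitrary $f\in V_{K,S}$, write $f=f_\alpha$ for some representative $\alpha\in\Qbar^\times$ modulo torsion, and use the definition of $V_K$ to choose $n\in\bN$ and $\beta\in K^\times$ such that $\alpha^n$ and $\beta$ represent the same class in $\Qbar^\times/\Tor(\Qbar^\times)$. Then $f_\beta = n\cdot f_\alpha$ in $\cF$, so it suffices to show $f_\beta=0$, equivalently that $\beta$ is a root of unity.

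Next I would unpack the support hypothesis. Since $\supp(f_\alpha)\subseteq S$ and $S\cap Y(\bQ,\infty)=\varnothing$, every archimedean place $y$ of $\Qbar$ lies outside $S$, so $\|\alpha\|_y=1$ at every such $y$; raising to the $n$-th power gives $\|\beta\|_y=1$ for all archimedean $y$. Because $\beta\in K$, this is equivalent to $|\sigma(\beta)|=1$ for every archimedean embedding $\sigma\colon K\hookrightarrow\bC$.

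The decisive step is then to apply the hypothesized real embedding $\sigma_0\colon K\hookrightarrow\bR$: since $\sigma_0(\beta)\in\bR$ and $|\sigma_0(\beta)|=1$, we must have $\sigma_0(\beta)\in\{1,-1\}=\sigma_0(\{\pm 1\})$, and injectivity of $\sigma_0$ forces $\beta=\pm 1$. This is torsion in $K^\times$, so $f_\beta=0$ and hence $f=0$.

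I do not anticipate any serious obstacle: the entire content of the lemma is that a real embedding separates $\pm 1$ from the rest of $K^\times$, a feature notably absent in totally imaginary fields such as $\bQ(i)$, where $(3+4i)/5$ has all archimedean absolute values equal to $1$ yet is not torsion. The hypothesis that $S$ be defined over $K$ does not enter this lemma and is presumably needed only downstream in the proof of Theorem~\ref{thm:2}.
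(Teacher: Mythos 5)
Your proof is correct, and it is genuinely different from (and cleaner than) the paper's argument. The paper fixes generators $\al_v$ of the prime ideal powers $\cP_v^h$ (raising to the class number $h$ to make them principal) together with a fundamental system of units $\xi_1,\ldots,\xi_n$, writes $f$ uniquely as a rational linear combination of the $f_{\al_v}$ and $f_{\xi_i}$, clears denominators, and then uses the vanishing at the real place to force an equality in $K^\times$ between a genuine $T$-nonunit and a unit, a contradiction. Your argument short-circuits all of this: using only the definition of $V_K$ you produce a single representative $\beta\in K^\times$ with $f_\beta=n\cdot f$, note that the support condition forces $\|\beta\|_v=1$ at the real place $v$, and conclude $\sigma_0(\beta)=\pm1$, hence $\beta=\pm1$ by injectivity of the embedding, so $f=0$. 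This avoids the class number and the $T$-unit structure theorem entirely. You are also correct that the ``$S$ defined over $K$'' hypothesis plays no role in this lemma under your argument; the paper's proof invokes it to write $S=\bigcup_{v\in T}Y(K,v)$ and identify elements of $V_{K,S}$ with $T$-units of $K$, but your route never needs that identification. Your example of $(3+4i)/5$ in $\bQ(i)$ is apt and isolates exactly why the real-embedding hypothesis is essential.
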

\begin{proof}
By assumption there is a finite set of places $T\subset M_K$ such that
\[
 S = \bigcup_{v\in T} Y(K,v).
\]
Since $S\cap Y(\bQ,\infty)=\varnothing$, clearly $v\nmid\infty$ for $v\in T$. For each $v$ we have an associated prime ideal $\cP_v\subset\cO_K$, where $\cO_K$ denotes the ring of integers of $K$. By raising to the class number $h$, we can make $\cP_v$ principal so that $\cP_v^h =(\al_v)$. Choose such an $\al_v$ for each $v\in T$ and let $\xi_1,\ldots, \xi_n$ be a $\bZ$-module basis for the unit group modulo torsion of $K$ (such a basis exists by Dirichlet's theorem). Notice that each $\al\in K^\times$ which is supported only on $T$ then has an expansion
\[
 (\al^h) = \prod_{v\in T} {\cP_v}^{h\cdot \ord_v(\al)}=(\al_v)^{\ord_v(\al)}
\]
by the unique factorization of prime ideals, where $\ord_v :K^\times\ra \bZ$ is the usual $v$-adic discrete valuation. Since this factorization holds modulo the units of $K$, it now follows that any $0\neq f\in V_{K,S}$ must have a unique expansion
\[
 f = \sum_{v\in T} r_v f_{\al_v} + \sum_{i=1}^n s_i f_{\xi_i},\quad\text{where}\quad r_v,s_i\in\bQ.
\]
Since $\supp(f)\subseteq S$, we have $f(y)=0$ for all $y|\infty$. Let $y\in Y(K,w)$ where $w$ is a real archimedean place of $K$. Then
\begin{align*}
 f(y) = 0 &= \sum_{v\in T} r_v f_{\al_v}(y) + \sum_{i=1}^n s_i f_{\xi_i}(y)\\
 &= \sum_{v\in T} r_v \log\|\al_v\|_y + \sum_{i=1}^n s_i \log\|\xi_i\|_y.
\end{align*}
From this it follows that
\[
 \big\|\prod_{v\in T} \al_v^{r_v}\big\|_y = \big\|\prod_{i=1}^n \xi_i^{-s_i}\big\|_y.
\]
Raising each side to an appropriate integral power, we find there exist integers $m_v,m_i$ such that:
\[
 \big\|\prod_{v\in T} \al_v^{m_v}\big\|_y = \big\|\prod_{i=1}^n \xi_i^{-m_i}\big\|_y.
\]
Observe that, since $\|\cdot\|_y$ is a real absolute absolute value, we have for the associated real embedding:
\[
 \prod_{v\in T} \al_v^{m_v} = \pm \prod_{i=1}^n \xi_i^{-m_i}
\]
But this is impossible, as the left hand side is an algebraic number with nontrivial nonarchimedean valuations, while the right hand side is an algebraic unit. Therefore, no nonzero element $f$ of $V_{K,S}$ can exist.
\end{proof}

\begin{proof}[Proof of Theorem \ref{thm:2}]
 Suppose that $\cF(S)$ is in fact dense in $\cF_p(S,0)$. By assumption we have a number field $K$ over which $S$ is defined and which has at least one real archimedean place. Let $LC(S,0,K)\subset \cF_p(S,0)$ denote the subspace of continuous functions supported on $S$ with integral zero which are locally constant on the sets $Y(K,v)$. Since by assumption there exist at least two places $v_1,v_2$ of $K$ such that $Y(K,v_i)\subseteq S$, the space $LC(S,0,K)$ is nonzero (consider the function which takes the value $+1$ on $Y(K,v_1)$, $-1$ on $Y(K,v_2)$, and vanishes elsewhere). Let $0\neq f\in LC(S,0,K)$, and normalize $f$ so that $\|f\|_p=1$. If $\cF(S)$ is indeed dense, then for any $\ep>0$ there exists some $g\in \cF(S)$ such that $\|f-g\|_p < \ep$. Let $P_K:\cF\ra V_K$ denote the orthogonal projection as defined in \cite[\S 2.3]{FM-mahler}. We recall from \cite{FM-mahler} that $P_K$ is a norm one projection with respect to all $L^p$ norms, $1\leq p\leq\infty$, and thus extends by continuity to the completion $\cF_p$. We therefore have
\begin{equation}\label{eqn:approx-byPKg}
 \|f-P_K g\|_p =\|P_K(f-g)\|_p \leq \|f-g\|_p <\ep,
\end{equation}
 since $P_Kf=f$ by the fact that $P_K$ is a conditional expectation on the sets $Y(K,v)$ (see \cite[Lemma 2.10]{FM-mahler}). But $P_K g\in \cF(S)\cap V_K = V_{K,S}=\{0\}$ by Lemma \ref{lemma:thm2-main-lemma}, so $P_Kg=0$ and 
 $
  \|f\|_p<\ep 
 $
 by \eqref{eqn:approx-byPKg} above, but this is a contradiction to our normalization of $f$ when we take any  $0<\ep<1$.
\end{proof}

\end{document}